\theoremstyle{plain}
 \newtheorem{theorem}{Theorem}[section]
 \newtheorem{lemma}{Lemma}[section]
 \newtheorem{corollary}{Corollary}[section]
 \newtheorem{definition}{Definition}[section]
\theoremstyle{remark}
 \numberwithin{equation}{section}
\renewcommand{\leq}{\leqslant}
\renewcommand{\geq}{\geqslant}
\title[]{Law of total probability and Bayes' theorem in Riesz spaces}
\subjclass[2010]{Primary  60A99, 62C10,  46N30; Secondary 60B99, 06F20.}
\keywords{Conditional probability; law of total probability; Bayes' theorem; inclusion-exclusion formula; Riesz spaces.}
\author[Hong]{ Liang Hong}
\address{
Department of Mathematics \\ 
Robert Morris University   \\ 
Moon, PA 15108, USA}
\email{hong@rmu.edu}
\begin{document}

\vspace{18mm}
\setcounter{page}{1}
\thispagestyle{empty}

\begin{center}
\end{center}

\begin{abstract}
This note generalizes the notion of conditional probability to Riesz spaces using the order-theoretic approach. With the aid of this concept, we establish the law of total probability and Bayes' theorem in Riesz spaces; we also prove an inclusion-exclusion formula in Riesz spaces. Several examples are provided to show that the law of total probability, Bayes' theorem and inclusion-exclusion formula in probability theory are special cases of our results.
\end{abstract}

\maketitle

\section{Introduction}
The study of stochastic processes in an abstract space can date back to as early as \cite{DeMARR}. Works along this line include \cite{GT}, \cite{Ghoussoub}, \cite{Hong}, \cite{Stoica1} and \cite{Stoica2}. About a decade ago, \cite{KLW1} initiated the study of stochastic processes in a measure-free setting. They carefully investigated the fundamental properties of conditional expectations on a probability space and abstract them to Riesz spaces; this allows one to generalize many concepts from probability theory to Riesz spaces. As a result, an order-theoretic approach to stochastic process can be formed to generalize the classical theory of stochastic processes. Since then this order-theoretic approach to stochastic processes has been developing fast. For order-theoretic investigation of conditional expectations and related concepts, we refer to \cite{GP}, \cite{Hong}, \cite{KLW4},  \cite{Watson1}; for discrete-time processes in Riesz spaces, we refer to \cite{KL1}, \cite{KLW1}, \cite{KLW2}, \cite{KLW3}, \cite{KLW5}, \cite{KLW6}, \cite{KVW1},  \cite{Troitsky}; for continuous-time processes in Riesz spaces, we refer to \cite{Grobler1}, \cite{Grobler2}, \cite{Grobler3}, \cite{Grobler4}, \cite{Grobler5}, \cite{GLM1}, \cite{VW1}, \cite{VW2}; for stochastic integrals in Riesz spaces, we refer to \cite{GL1}, \cite{GL2}, \cite{LW1}.

\cite{KLW6} extended the notation of independence to Riesz spaces; \cite{Grobler4} gave a slightly more general definition of independence in Riesz spaces. In this note, we follow \cite{KLW6} to extend the concept of conditional probability to Riesz spaces and then establish the law of total probability and Bayes' theorem in Riesz spaces; we also prove an inclusion-exclusion formula in Riesz spaces. Several examples are given to show that the concept of conditional probability, law of total probability, Bayes' theorem and inclusion-exclusion formula in probability theory are special cases of our results.

The remainder of this note is organized as follows. Section 2 review some basic concepts and results of Riesz spaces and conditional expectations in Riesz spaces; for further details, we refer readers to  \cite{AB2}, \cite{Fuchs1}, \cite{KLW4}, \cite{LZ}, \cite{Watson1} and \cite{Zaanen}. Section 3 generalizes the concept of conditional probability to Riesz spaces; we show that this notion leads naturally to the notion of independence in Riesz spaces. Section 4 establishes the law of total probability, Bayes' theorem and the inclusion-exclusion formula in Riesz spaces.

\section{Preliminaries of Riesz space theory}
A partially ordered set $(X, \leq)$ is called a \emph{lattice} if the infimum and supremum of any pair of elements in $X$ exist. A real vector space $X$ equipped with a partial order $\leq$ is called an \emph{ordered vector space} if its vector space structure is compatible with the order structure in a manner such that
\begin{enumerate}
  \item [(a)]if $x\leq y$, then $x+z\leq y+z$ for any $z\in X$;
  \item [(b)]if $x\leq y$, then $\alpha x\leq \alpha y$ for all $\alpha\geq 0$.
\end{enumerate}
An ordered vector space is called a \emph{Riesz space} (or a \emph{vector lattice}) if it is also a lattice at the same time. For any pair $x, y$ in a Riesz space, $x\vee y$ denotes and supremum of $\{x, y\}$, $x\wedge y$ denotes the infimum of $\{x, y\}$, and $|x|$ denotes $x\vee (-x)$. A Riesz space $X$ is said to be \emph{Dedekind complete} if every nonempty subset of $X$ that is bounded from above has a supremum. A vector subspace of a Riesz space is said to be a \emph{Riesz subspace} if it is closed under the lattice operation $\vee$. A subset $Y$ of a Riesz space $X$ is said to be \emph{solid} if $|x| \leq |y|$ and $y\in Y$ imply that $x\in Y$. A solid vector subspace of a Riesz space is called an \emph{ideal}. A net $(x_{\alpha})_{\alpha\in A}$ is said to be \emph{decreasing} if $\alpha\geq \beta$ implies $x_{\alpha}\leq x_{\beta}$. The notation $x_{\alpha}\downarrow x$ means $(x_{\alpha})_{\alpha\in A}$ is a decreasing net and the infimum of the set $\{x_{\alpha}\mid \alpha\in A\}$ is $x$. A net $(x_{\alpha})_{\alpha\in A}$ in a Riesz space $X$ is said to be \emph{order-convergent} to an element $x\in X$, often written as $x_{\alpha}\xrightarrow{o} x$,  if there exists another net $(y_{\alpha})_{\alpha\in A}$ in $X$ such that $|x_{\alpha}-x|\leq y_{\alpha}\downarrow 0$. A subset $A$ of a Riesz space $X$ is said to be \emph{order-closed} if a net $\{x_{\alpha}\}$ in $A$ order-converges to $x_{0}\in X$ implies that $x_0\in A$. An order-closed ideal is called a \emph{band}. Two elements $x$ and $y$ in a Riesz space $X$ is said to be \emph{disjoint} if $|x|\wedge |y|=0$ and is denoted by $x\bot y$. The \emph{disjoint complement} of a subset $A$ of a Riesz space $X$ is defined as $A^d=\{x\in X\mid x\wedge y\text{ for all $y\in A$}\}$. A band $B$ in a Riesz space is said to be a \emph{projection band} if $X$ equals the direct sum of $B$ and $B^d$, that is, $X=B\oplus B^d$. Every band in a Dedekind complete Riesz space is a projection band. If $B$ is a projection band of a Riesz space $X$ and $x\in X$, then $x=x_1+x_2$, where $x_1\in B$ and $x_2\in B^d$. In this case, the map $P_B: X\rightarrow X$ defined by $P_B(x)=x_1$ is called the \emph{band projection} associated with $B$.


An additive group $G,$ equipped with a partial order $\leq$ is called a \emph{partially ordered group} if its group operation is compatible with the order structure in a manner such that
\begin{enumerate}
  \item [(a)]if $x\leq y$, then $x+z\leq y+z$ for any $z\in G$;
  \item [(b)]if $x\leq y$, then $z+x\leq z+y$ for any $z\in G$.
\end{enumerate}
A partially ordered group is said to be a \emph{lattice-ordered group} if it is also a lattice at the same time. Lattice-ordered rings and lattice-ordered fields are defined in a similar manner. Specifically, a ring $R$ equipped with a partial order $\leq$ is called a \emph{partially ordered ring} if its ring operations are compatible with the order structure in a manner such that
\begin{enumerate}
  \item [(a)]if $x\leq y$, then $x+z\leq y+z$ for any $z\in R$;
  \item [(b)]if $x\leq y$ and $z>0$, then $x z\leq y z$ and $z x\leq z y$.
\end{enumerate}
A partially ordered ring is said to be a \emph{lattice-ordered ring} if its additive group is a lattice-ordered group. Likewise, a field $F$ equipped with a partial order $\leq$ is called a \emph{partially ordered field} if its field operations are compatible with the order structure in a manner such that
\begin{enumerate}
  \item [(a)]if $x\leq y$, then $x+z\leq y+z$ for any $z\in F$;
  \item [(b)]if $x> y$ and $z>0$, then $x z>y  z$;
  \item [(c)]$1>0$, where $1$ is the identity element of $F$.
\end{enumerate}
A partially ordered field is said to be a \emph{lattice-ordered field} if its additive group is a lattice-ordered group.

A linear operator $T$ on a vector space $X$ is said to be a \emph{projection} if $T^2=T$. A linear operator $T$ between two Riesz spaces $X$ and $Y$ is said to be \emph{positive} if $x\in X$ and $x\geq 0$ implies $T(x)\geq 0$; $T$ is said to be \emph{strictly positive} if $x\in X$ and $x>0$ implies $T(x)>0$; $T$ is said to be \emph{order-continuous} if $x_{\alpha}\xrightarrow{o} 0$ in $X$ implies $T(x_{\alpha})\xrightarrow{o} 0$ in $Y$. A linear operator $T$ on a vector space $V$ is called an \emph{averaging operator} if $T(y T(x))=T(y)T(x)$ for any pair $x, y \in V$ such that $y T(x)\in V$.

Following \cite{KLW1} and \cite{KLW4}, we say a positive order-continuous projection on a Riesz space $X$ with a weak order unit $e$ is a \emph{conditional expectation} if the range $R(T)$ of $T$ is a Dedekind complete Riesz subspace of $X$ and $T(e)$ is a weak order unit of $X$ for every weak order unit $e$ of $X$. If $T$ is a conditional expectation on a Riesz space $X$, then there is a weak order unit $e$ such that $T(e)=e$.

\section{Conditional probability in Riesz spaces}
To extend the notion of conditional probability in probability theory to Riesz spaces, we first recall the following definition.
\begin{definition}\label{definition2.0}
Let $(\Omega, \mathcal{F}, P)$ be a probability space. Suppose $A$ and $B$ be two events such that $P(B)>0$. Then the conditional probability $P(A\mid B)$ of $A$ given $B$ is defined as
\begin{equation*}
P(A\mid B)=\frac{P(A\cap B)}{P(B)}.
\end{equation*}
\end{definition}
Intuitively, $P(A\mid B)$ measures the relative size of the projection of $A$ onto $B$. This motivates the following definition.

\begin{definition}\label{definition2.1}
Let $E$ be a Dedekind complete lattice-ordered field with a conditional expectation $T$ and a weak unit $e$ such that $T(e)=e$. Suppose $B_1$ and $B_2$ are two projection bands, $P_{B_1}$ and $P_{B_2}$ are the corresponding band projections of $B_1$ and $B_2$, respectively, and $B_1\cap B_2\neq \phi$. Then the conditional probability $P_{B_1\mid B_2}$ of $B_1$ given $B_2$ with respect to $T$ is defined as
\begin{equation}\label{2.1}
P(B_1\mid B_2)(f)=[TP_{B_2}P_{B_1}(f)][TP_{B_2}(f)]^{-1}, \quad \forall f\in E.
\end{equation}
provided $TP_{B_2}(f)\neq 0$.
\end{definition}
\noindent \textbf{Remark.} We require $E$ to be a lattice-ordered field because we need the multiplication operation on $E$ to be invertible. It is evident that if $P_{B_1}=P_{B_2}$ then $P_{B_1\mid B_2}=I$, where $I$ is the identity operator on $E$. Since two band projections on a Riesz space are commutative, Equation (\ref{2.1}) may be replaced by
\begin{equation}\label{2.2}
P(B_1\mid B_2)(f)=\left[TP_{B_1}P_{B_2} (f)\right][TP_{B_2}(f)]^{-1}, \quad \forall f\in  E,\nonumber
\end{equation}
provided $TP_{B_2}(f)\neq 0$.
\bigskip

The following example shows that the classical definition of conditional probability is a special case of Definition 2.1.\\

\noindent \textbf{Example 2.1.} Let $E$ be the family of all random variables with finite mean on a probability space $(\Omega, \mathcal{F}, P)$, that is, $E=L^1(\Omega, \mathcal{F}, P)$. Then $e=1_{\Omega}$, where $1_{A}$ denotes the indicator function of an event $A$. Let $T$ be the expectation operator. For two events $A, B\in\mathcal{F}$ with $B\neq \phi$, the bands generated by $1_A$ and $1_B$ are $B_1=L^1(A, \mathcal{F}\cap A, P)$ and $B_2=L^1(B, \mathcal{F}\cap B, P)$, respectively. Therefore, the corresponding band projections $P_{B_1}$ and $P_{B_2}$ are defined as $P_{B_1}(f)=f1_A$ and $P_{B_2}(f)=f1_B$. We have
\begin{equation*}
TP_{B_2}(e)=E[1_{\Omega}1_B]=P(B),
\end{equation*}
and
\begin{equation*}
TP_{B_2}P_{B_1}(e)=E[1_{\Omega}1_B1_A]=P(A\cap B).
\end{equation*}
It follows that $P(B_1\mid B_2)=P(A\mid B)$ which is consistent with the classical definition of conditional probability of $A$ given $B$. \\

Recall that in probability theory two events $A$ and $B$ are defined to be \emph{independent} if $P(A\mid B)$ does not depend on $B$ and $P(B\mid A)$ does not depend on $A$ , that is $P(A\mid B)=P(A)$ and $P(B\mid A)=P(B)$. To accommodate the case $P(B)=0$, we usually define two events $A$ and $B$ to be \emph{independent} if $P(A\cap B)=P(A)P(B)$. Thus, Definition \ref{definition2.1} leads naturally to the following concept of independence in Riesz spaces.

\begin{definition}\label{definition2.2}
Let $E$ be a Dedekind complete lattice-ordered ring with a conditional expectation $T$ and a weak unit $e$ such that $T(e)=e$ and the multiplication operation coincide with the lattice operation $\wedge$. Suppose $B_1$ and $B_2$ are two projection bands of $E$ and $P_{B_1}$ and $P_{B_2}$ are the associated band projections of $B_1$ and $B_2$, respectively. We say $B_1$ and $B_2$ are independent with respect to $T$ if $TP_{B_2}P_{B_1}(e)=TP_{B_2}(e)TP_{B_1}(e)$.
\end{definition}
\noindent \textbf{Remark.} In Definition \ref{definition2.2}, we assume that $E$ is a Dedekind complete lattice-ordered ring and its multiplication is the lattice operation $\wedge$. Definition 4.1 in \cite{KLW6} defines two projection bands $P_{B_1}$ and $P_{B_2}$ to be independent with respect to a conditional expectation $T$ in a Dedekind complete Riesz space if
\begin{equation*}
TP_{B_1}TP_{B_2}(e)=TP_{B_1}P_{B_2}(e)=TP_{B_2}TP_{B_1}(e).
\end{equation*}
As \cite{KVW1} pointed out, the space of $e$ bounded elements in $E$ will admit a natural f-algebra structure if we set $P(e)Q(e)=PQ(e)$ for two band projections $P$ and $Q$. In view of this, our definition and Definition 4.1 in \cite{KLW6} are consistent.



\section{Law of total probability and Bayes' theorem in Riesz spaces}
In probability theory, the law of total probability and Bayes' theorem are two fundamental theorems involving conditional probability. In this section,
we will extend these two theorems to Riesz spaces. We will also give examples to show that the classical law of total probability and Bayes' theorem are special cases
of law of total probability and Bayes' theorem in Riesz spaces. To motivate our study, we first recall the \emph{law of total probability} in probability theory:
\begin{theorem}\label{theorem3.1}
Suppose $B_1, ..., B_n$ are mutually disjoint events on a probability space $(\Omega, \mathcal{F}, P)$ such that $\Omega=\cup_{i=1}B_i$ and $P(B_i)>0$ for each $1\leq i\leq n$. Then for any event $A$
\begin{equation*}
P(A)=\sum_{i=1}^nP(A\mid B_i)P(B_i).
\end{equation*}
\end{theorem}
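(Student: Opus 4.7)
The plan is to decompose the event $A$ using the given partition $\{B_1,\dots,B_n\}$, apply finite additivity of $P$, and then translate each summand into the conditional form via Definition \ref{definition2.0}.

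First I would observe that since $\Omega=\cup_{i=1}^{n} B_i$, one has
\begin{equation*}
A = A\cap\Omega = A\cap\Bigl(\bigcup_{i=1}^{n} B_i\Bigr) = \bigcup_{i=1}^{n}(A\cap B_i).
\end{equation*}
Because the $B_i$ are pairwise disjoint, so are the sets $A\cap B_i$, i.e.\ $(A\cap B_i)\cap(A\cap B_j)=A\cap(B_i\cap B_j)=\emptyset$ for $i\neq j$. Next, finite additivity of $P$ yields $P(A)=\sum_{i=1}^{n}P(A\cap B_i)$.

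Finally, the hypothesis $P(B_i)>0$ ensures that $P(A\mid B_i)$ is well defined for every $i$, and Definition \ref{definition2.0} rearranges to $P(A\cap B_i)=P(A\mid B_i)P(B_i)$. Substituting into the previous display gives the desired identity $P(A)=\sum_{i=1}^{n}P(A\mid B_i)P(B_i)$.

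There is no real obstacle here: the argument is a textbook application of additivity plus the definition of conditional probability. The only point requiring any care is the role of the hypothesis $P(B_i)>0$, which is used purely to guarantee that each conditional probability in the sum is defined; no limiting argument or measurability subtlety is involved, since the partition is finite and every $B_i$ lies in $\mathcal{F}$ by assumption.
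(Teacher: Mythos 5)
Your proof is correct and is the standard argument: the paper does not actually prove Theorem \ref{theorem3.1} (it is recalled as a classical fact to motivate the Riesz-space generalization), but the discussion immediately following it invokes exactly your decomposition, namely $P(A)=\sum_{i=1}^n P(A\cap B_i)$ together with $P(A\cap B_i)=P(A\mid B_i)P(B_i)$. So your argument coincides with the one the paper intends, and your remark on the role of the hypothesis $P(B_i)>0$ is apt.
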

Since $P(A\mid B_i)P(B_i)=P(A\cap B_i)$, the intuition of law of total probability can be described as follows: If an event $A$ is the  union of $n$ mutually disjoint sub-events $A\cap B_i$, then the projection onto $A$ is equivalent to the sum of all projections onto $A\cap B_i$'s, that is,
\begin{equation*}
P(A)=\sum_{i=1}^nP(A\cap B_i).
\end{equation*}
This intuition extends naturally to the abstract case of Riesz spaces as the next theorem shows.
To this end, we first state and prove an inclusion-exclusion formula for projection bands; this result extends the classical inclusion-exclusion theorem
for events to Riesz spaces. \\

\begin{lemma}[Inclusion-exclusion formula for projection bands]\label{lemma3.1}
Let $B_1, ..., B_n$ be $n$ projection bands in a Riesz space. Then $B_1+...+B_n$ is a projection band and
\begin{eqnarray}\label{2.4}
P_{B_1+...+B_n} &=& \sum_{i=1}^nP_{B_i}-\sum_{1\leq i_1<i_2\leq n} P_{B_{i_1}\cap B_{i_2}} \nonumber\\
                & & +...+(-1)^{k-1}\sum_{1\leq i_1<...<i_k\leq n} P_{B_{i_1}\cap...\cap B_{i_k}}\nonumber \\
                & & +...+(-1)^{n-1} P_{B_1\cap...\cap B_n}.
\end{eqnarray}
\end{lemma}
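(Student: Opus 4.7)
The plan is to prove this by induction on $n$, with the $n=2$ case doing the real geometric work and the induction step being purely algebraic manipulation combined with the distributive law for bands.

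For the base case $n=2$, I would exploit the fact that band projections commute and that $P_{B_1\cap B_2}=P_{B_1}P_{B_2}$. Any element $x$ in the Riesz space admits a four-fold decomposition
\begin{equation*}
x = x_{11}+x_{10}+x_{01}+x_{00},
\end{equation*}
where $x_{11}\in B_1\cap B_2$, $x_{10}\in B_1\cap B_2^d$, $x_{01}\in B_1^d\cap B_2$, and $x_{00}\in B_1^d\cap B_2^d$, obtained by applying $P_{B_1}$ and $I-P_{B_1}$ composed with $P_{B_2}$ and $I-P_{B_2}$. This exhibits $B_1+B_2$ as a projection band with $P_{B_1+B_2}(x)=x_{11}+x_{10}+x_{01}$, and a direct computation gives $P_{B_1}+P_{B_2}-P_{B_1\cap B_2}=P_{B_1+B_2}$.

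For the induction step, assume the formula holds for any $n-1$ projection bands. Writing $C=B_1+\cdots+B_{n-1}$, the inductive hypothesis ensures $C$ is a projection band, and the $n=2$ case then gives
\begin{equation*}
P_{C+B_n}=P_C+P_{B_n}-P_{C\cap B_n}.
\end{equation*}
The crucial intermediate fact I need is the distributive identity
\begin{equation*}
C\cap B_n=(B_1\cap B_n)+\cdots+(B_{n-1}\cap B_n),
\end{equation*}
which follows from the distributivity of the lattice of bands (equivalently, from commutativity of band projections: $P_{B_n}P_{B_i}=P_{B_i\cap B_n}$ and the $n=2$ result applied to sums of the $B_i\cap B_n$). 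Then I apply the induction hypothesis twice, once to $P_C$ and once to $P_{C\cap B_n}=P_{(B_1\cap B_n)+\cdots+(B_{n-1}\cap B_n)}$, and collect terms by the size of the index subset. A standard Pascal-type identity $\binom{n-1}{k-1}+\binom{n-1}{k}=\binom{n}{k}$ packages the two contributions into the single alternating sum over all $k$-subsets of $\{1,\ldots,n\}$.

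The main technical obstacle is the distributive identity for $C\cap B_n$; everything else is bookkeeping. I expect this will go through cleanly because band projections in a Riesz space commute and the associated projection operators satisfy $P\wedge Q=PQ$ on the Boolean algebra of band projections, so the result reduces to the well-known inclusion-exclusion identity in a Boolean algebra applied to the idempotents $P_{B_1},\ldots,P_{B_n}$. If one prefers a slicker exposition, one can bypass induction entirely by expanding $\prod_{i=1}^n(I-P_{B_i})=I-P_{B_1+\cdots+B_n}$ (which follows from the four-fold decomposition argument above iterated) and rearranging, but the inductive presentation is the most transparent for a first proof.
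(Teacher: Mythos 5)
Your proposal is correct and follows essentially the same route as the paper: induction on $n$ with the two-band identity $P_{B_1+B_2}=P_{B_1}+P_{B_2}-P_{B_1\cap B_2}$ as the base case, which the paper simply cites (Theorem 1.45 of Aliprantis--Burkinshaw) rather than reproving via your four-fold decomposition. The only cosmetic difference is in the induction step: the paper expands $P_{B_1+\cdots+B_{n-1}}P_{B_n}$ by multiplying the inductive expansion of $P_{B_1+\cdots+B_{n-1}}$ term by term by $P_{B_n}$ and using $P_A P_{B_n}=P_{A\cap B_n}$, whereas you route the same computation through the distributivity of the band lattice and a second application of the induction hypothesis to the bands $B_i\cap B_n$; both yield the identical collection of signed terms.
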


\begin{proof}
The fact that $B_1+...+B_n$ is a projection band is trivial. We show Equation (\ref{2.4}) by induction on $n$. The case $n=2$ follows from Theorem 1.45 of \cite{AB2}. Suppose Equation (\ref{2.4}) holds for $n-1$. Then the induction hypothesis implies
\begin{eqnarray*}
P_{B_1+...+B_n} &=& P_{B_1+...+B_{n-1}}+P_{B_n}-P_{B_1+...+B_{n-1}}P_{B_n} \\
                &=& \sum_{i=1}^{n-1}P_{B_i}-\sum_{1\leq i_1<i_2\leq n} P_{B_{i_1}\cap B_{i_2}}+...\\
                & & +(-1)^{k-1}\sum_{1\leq i_1<...<i_k\leq n} P_{B_{i_1}\cap...\cap B_{i_k}}+...+(-1)^{n-2}P_{B_1\cap...\cap B_{n-1}}\\
                & & +P_{B_n}-P_{B_1+...+B_{n-1}}P_{B_n}\\
                &=& \sum_{i=1}^{n-1}P_{B_i}-\sum_{1\leq i_1<i_2\leq n} P_{B_{i_1}\cap B_{i_2}}+...\\
                & & +(-1)^{k-1}\sum_{1\leq i_1<...<i_{k}\leq n} P_{B_{i_1}\cap...\cap B_{i_k}}+...+(-1)^{n-2}P_{B_1\cap...\cap B_{n-1}}\\
                & & +P_{B_n}-\bigg \lbrack \sum_{i=1}^{n-1}P_{B_i}-\sum_{1\leq i_1<i_2\leq n} P_{B_{i_1}\cap B_{i_2}}+...\\
                & & +(-1)^{k-2}\sum_{1\leq i_1<...<i_{k-1}\leq n} P_{B_{i_1}\cap...\cap B_{i_{k-1}}}+...+(-1)^{n-2}P_{B_1\cap...\cap B_{n-1}} \bigg \rbrack P_{B_n}\\
                &=& \sum_{i=1}^{n-1}P_{B_i}+P_{B_n} \\
                & & -(\sum_{i=1}^{n-1} P_{B_i})P_{B_n}-\sum_{1\leq i_1<i_2\leq n}} P_{B_{i_1}\cap B_{i_2}\\
                & & +...+\\
                & & (-1)^{k-1}\sum_{1\leq i_1<...<i_k\leq n} P_{B_{i_1}\cap...\cap B_{i_k}}+(-1)^{k-1}\sum_{1\leq i_1<...<i_{k-1}\leq n} P_{B_{i_1}\cap...\cap B_{i_{k-1}}}P_{B_n}\\
                & & +...+\\
                & & +(-1)^{n-1}P_{B_1\cap ...\cap B_{n-1}}P_{B_n}\\
                &=& \sum_{i=1}^nP_{B_i}-\sum_{1\leq i_1<i_2\leq n} P_{B_{i_1}\cap B_{i_2}}\\ \nonumber
                & & +...+(-1)^{k-1}\sum_{1\leq i_1<...<i_k\leq n} P_{B_{i_1}\cap...\cap B_{i_k}}\\ \nonumber
                & & +...+(-1)^{n-1} P_{B_1\cap...\cap B_n}.
\end{eqnarray*}
By mathematical induction, (\ref{2.4}) holds for all positive integer $n$.
\end{proof}

\begin{theorem}[Law of total probability in Riesz spaces]\label{theorem3.2}
Let $E$ be a Dedekind complete lattice-ordered field with a conditional expectation $T$ and a weak unit $e$ such that $T(e)=e$. Suppose $B_1, ..., B_n$ are $n$ mutually disjoint nonempty projection bands such that $E=B_1\oplus B_2 \oplus...\oplus B_n$and $P_{B_i}$ is the band projection onto $B_i$. Then for any projection band $D$ and its associated band projection $P_D$
\begin{equation}\label{2.3}
TP_D(f)=\sum_{i=1}^n (P_{D\mid B_i}(f))(TP_{B_i}(f)),\quad \forall f\in E.
\end{equation}
provided $TP_{B_i}(f)\neq 0$ for all $1\leq i\leq n$.
\end{theorem}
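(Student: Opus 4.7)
The plan is to unwind Definition 2.1 on the right-hand side of (2.3) and reduce the identity to the fact that the band projections $P_{B_1},\dots,P_{B_n}$ form a partition of the identity operator. By Definition 2.1, whenever $TP_{B_i}(f)\neq 0$ one has $(P_{D\mid B_i}(f))(TP_{B_i}(f)) = [TP_{B_i}P_D(f)][TP_{B_i}(f)]^{-1}(TP_{B_i}(f)) = TP_{B_i}P_D(f)$, the cancellation being legitimate because $E$ is a lattice-ordered field in which $TP_{B_i}(f)$ is invertible by hypothesis. Summing over $i$ and pulling the (linear) operator $T$ through the finite sum gives $\sum_{i=1}^n (P_{D\mid B_i}(f))(TP_{B_i}(f)) = T\bigl(\sum_{i=1}^n P_{B_i}\bigr)P_D(f)$, so the theorem reduces to showing that $\sum_{i=1}^n P_{B_i}$ equals the identity operator $I$ on $E$.

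To establish $\sum_{i=1}^n P_{B_i}=I$, I would appeal to Lemma 3.1. Because the bands $B_1,\dots,B_n$ are pairwise disjoint, every intersection $B_{i_1}\cap\cdots\cap B_{i_k}$ with $k\geq 2$ is the trivial band $\{0\}$, so the corresponding band projections $P_{B_{i_1}\cap\cdots\cap B_{i_k}}$ vanish. Hence only the first sum in the inclusion-exclusion formula survives, giving $P_{B_1+\cdots+B_n}=\sum_{i=1}^n P_{B_i}$. The direct sum hypothesis $E=B_1\oplus\cdots\oplus B_n$ means $B_1+\cdots+B_n=E$, whose band projection is $I$, and therefore $\sum_{i=1}^n P_{B_i}=I$ as required. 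Substituting back yields $T\bigl(\sum_{i=1}^n P_{B_i}\bigr)P_D(f)=TP_D(f)$, which is exactly (2.3).

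I do not anticipate a serious obstacle in this argument: once Definition 2.1 is unpacked and the trivial-intersection terms in Lemma 3.1 are discarded, the theorem is essentially a combination of linearity of $T$ with a partition-of-unity identity for band projections. The only delicate points are book-keeping rather than conceptual, namely (i) justifying the scalar cancellation, which uses the lattice-ordered field structure of $E$ together with the hypothesis $TP_{B_i}(f)\neq 0$, and (ii) verifying that the disjointness of the $B_i$ really does force the higher-order terms in Lemma 3.1 to vanish, which is immediate from the definition of disjoint bands.
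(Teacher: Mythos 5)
Your proposal is correct and follows essentially the same route as the paper: both unwind Definition \ref{definition2.1} to reduce the right-hand side to $\sum_i TP_{B_i}P_D(f)$ and then invoke Lemma \ref{lemma3.1} together with pairwise disjointness and the decomposition $E=B_1\oplus\cdots\oplus B_n$. The only (immaterial) difference is that you apply the inclusion-exclusion lemma to the bands $B_i$ to get $\sum_i P_{B_i}=I$ before composing with $P_D$, whereas the paper applies it directly to the disjoint bands $D\cap B_i$ to get $\sum_i P_{D\cap B_i}=P_D$.
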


\begin{proof}
Definition \ref{definition2.1} shows that the right-hand side of Equation (\ref{2.3}) equals
\begin{eqnarray*}
\sum_{i=1}^n T(P_D P_{B_i}) &=& T \left[\sum_{i=1}^n (P_{D\cap B_i})\right]\nonumber
\end{eqnarray*}
Since $B_i$'s are mutually disjoint, $P_{D\cap B_i}\cap P_{D\cap B_j}=0$ for $i\neq j$. It follows from Lemma \ref{lemma3.1} that
\begin{eqnarray*}
T \left[\sum_{i=1}^n (P_{D\cap B_i})\right] & = & T \left[ (P_{D\cap (\sum_{i=1}^n B_i)} \right] \\
& =& TP_D.
\end{eqnarray*}
Therefore, the theorem is established.
\end{proof}

Theorem \ref{theorem3.1} and Definition \ref{definition2.1} immediately imply the following Bayes' Theorem in Riesz spaces.

\begin{corollary}[Bayes' Theorem in Riezs spaces]\label{corollary3.1}
Let $E$ be a Dedekind complete lattice-ordered field with a conditional expectation $T$ and a weak unit $e$ such that $Te=e$. Suppose $B_1, ..., B_n$ are $n$ mutually disjoint nonempty projection bands such that $E=B_1\oplus B_2 \oplus...\oplus B_n$and $P_{B_i}$ is the band projection onto $B_i (1\leq i\leq n)$. Then for any projection band $D$ and any $1\leq j\leq n$
\begin{equation}\label{2.5}
P_{B_j\mid D}(f)=\left[(P_{D\mid B_j}(f))(TP_{B_j}(f))\right] \left[\sum_{i=1}^n (P_{D\mid B_i}(f))(TP_{B_i}(f))\right]^{-1},
\quad \forall f\in E, \nonumber
\end{equation}
provided $TP_{B_i}(f)\neq 0$ for all $1\leq i\leq n$.
\end{corollary}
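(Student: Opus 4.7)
The plan is to unwind the definition of $P_{B_j \mid D}(f)$, rewrite its numerator using the conditional probability along the reversed conditioning, and substitute the law of total probability into its denominator. Since the statement is advertised as a corollary, the argument should be little more than a bookkeeping chain of rewrites, with the only nontrivial point being a careful handling of the nonvanishing hypotheses so that all occurring inverses exist in the lattice-ordered field $E$.

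Concretely, I would first apply Definition \ref{definition2.1} to obtain
\begin{equation*}
P_{B_j \mid D}(f) = [T P_D P_{B_j}(f)] \, [T P_D(f)]^{-1},
\end{equation*}
which requires $TP_D(f) \neq 0$. Using the commutativity of band projections in a Dedekind complete Riesz space, I would rewrite $TP_D P_{B_j}(f) = TP_{B_j} P_D(f)$ and then recognize, via Definition \ref{definition2.1} once more, that this quantity equals $(P_{D \mid B_j}(f))(TP_{B_j}(f))$; this uses the hypothesis $TP_{B_j}(f) \neq 0$. For the denominator, I would invoke Theorem \ref{theorem3.2} applied to the decomposition $E = B_1 \oplus \cdots \oplus B_n$ to replace $TP_D(f)$ with $\sum_{i=1}^n (P_{D \mid B_i}(f))(TP_{B_i}(f))$. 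Combining the two substitutions yields exactly formula (\ref{2.5}).

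The only genuine obstacle is the bookkeeping around invertibility: the statement assumes $TP_{B_i}(f) \neq 0$ for every $i$, which legitimizes each term $P_{D \mid B_i}(f)$, but the final inverse in (\ref{2.5}) is an inverse of the sum, so I would verify that under these hypotheses the expression $\sum_{i=1}^n (P_{D \mid B_i}(f))(TP_{B_i}(f)) = TP_D(f)$ is itself nonzero (which is the condition ensuring $P_{B_j \mid D}(f)$ was defined in the first place). Once this is noted, the calculation is a direct substitution and no induction, order convergence, or projection-band arithmetic beyond commutativity is needed.
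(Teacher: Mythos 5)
Your proposal is correct and matches the paper's (unstated) argument exactly: the paper simply remarks that the corollary is immediate from the law of total probability (Theorem \ref{theorem3.2}) and Definition \ref{definition2.1}, and your chain of rewrites --- expanding $P_{B_j\mid D}(f)$, identifying the numerator $TP_DP_{B_j}(f)$ with $(P_{D\mid B_j}(f))(TP_{B_j}(f))$ via commutativity of band projections, and substituting Theorem \ref{theorem3.2} into the denominator --- is precisely that argument spelled out. Your observation that one must additionally ensure $TP_D(f)\neq 0$ for the left-hand side (and the inverted sum) to make sense is a point the paper glosses over, so it is a worthwhile addition rather than a deviation.
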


\bigskip
\noindent \textbf{Example 3.1.} Take $E=L^1(\Omega, \mathcal{F}, P)$. Then $e=1_{\Omega}$. Suppose $T$ is the expectation operator and $P_{B_i}$'s are defined as $P_{B_i}(f)=1_{B_i}f$ for $f\in E$, where $B_1, B_2, ..., B_n$ are $n$ disjoint nonempty events such that $\Omega=\cup_{i=1}^n B_i$. Also, for any event $D$ define $P_D(f)=1_Df$ for $f\in E$. Then $TP_D(e)=P(D), TP_{B_i}(e)=P(B_i)$ and $P_{D\mid B_i}(e)=P(D\mid B_i)$; thus, Theorem \ref{theorem3.2} specializes to Theorem \ref{theorem3.1}. Moreover, we have $P_{B_1+...+B_n}(e)=P(\cup_{i=1}^n B_i), P_{B_i} (e)=P(B_i), P_{B_{i_1} \cap B_{i_2}} (e)=P(B_{i_1}\cap B_{i_2}), ...$; hence Lemma \ref{lemma3.1} specializes to the classical inclusion-exclusion formula. Finally, in this case Corollary \ref{corollary3.1} yields
\begin{equation*}
P(B_j\mid D)=\frac{P(D\mid B_j) P(B_j)}{\sum_{i=1}^n P(D\mid B_i)P(B_i)};
\end{equation*}
therefore, Corollary \ref{corollary3.1} specializes to the classical Bayes' theorem.

\section*{Acknowledgements} Thanks are due to Professor Coenraad C.A. Labuschagne for providing the author with some references.

\bibliographystyle{amsplain}

\end{document}